\DeclareMathAlphabet{\varmathbb}{U}{pxsyb}{m}{n}
\def\leq{\leqslant}
\def\geq{\geqslant}
\def\phi{\varphi}
\def\kappa{\varkappa}
\newcommand{\D}{\mathrm{d}\kern0.2pt}%
\newcommand{\E}{\mathrm{e}\kern0.2pt} 
\newcommand{\ii}{\kern0.05em\mathrm{i}\kern0.05em}
\newcommand{\RR}{\mathbb{R}}%
\newtheorem{theorem}{\bf \indent Theorem}[section]
\newtheorem{proposition}{\bf \indent Proposition}[section]
\theoremstyle{remark}
\numberwithin{equation}{section}
\begin{document}

\noindent {\Large \bf Asymptotic mean value properties of meta- \\[3pt] and
panharmonic functions}

\vskip5mm

{\bf N. Kuznetsov}

\vskip-2pt {\small Laboratory for Mathematical Modelling of Wave Phenomena}
\vskip-4pt {\small Institute for Problems in Mechanical Engineering} \vskip-4pt
{\small Russian Academy of Sciences} \vskip-4pt {\small V.O., Bol'shoy pr. 61, St.
Petersburg, 199178} \vskip-4pt {\small Russian Federation} \vskip-4pt {\small
nikolay.g.kuznetsov@gmail.com}

\vskip4mm

\parbox{134mm} {\noindent Asymptotic mean value properties, their converse and some
other related results are considered for solutions to the $m$-dimensional Helmholtz
equation (metaharmonic functions) and solutions to its modified counterpart
(panharmonic functions). Some of these properties have no analogues for harmonic
functions.}

\vskip4pt

{\centering \section{Introduction and main theorem} }

\noindent Mean value properties of harmonic functions (solutions of the Laplace
equation; see \cite{ABR}, p.~25, about the origin of the term `harmonic') are well
known as well as various versions of assertions converse to these properties; see
the survey articles \cite{NV} and \cite{Ku}. On the other hand, analogous properties
of solutions to several other simple partial differential equations are studied less
thoroughly and, it may be said, fragmentary. Only recently the mean value property
for balls was obtained (see \cite{Ku1}) for solutions to the $m$-dimensional
Helmholtz equation:
\begin{equation}
\nabla^2 u + \lambda^2 u = 0 , \quad \lambda \in \RR \setminus \{0\} .
\label{Hh}
\end{equation}
Here and below, $\nabla = (\partial_1, \dots , \partial_m)$ denotes the gradient
operator in $\RR^m$ and $\partial_i = \partial / \partial x_i$.

In what follows, the term `metaharmonic function' is used as an abbreviation to
`solution of the Helmholtz equation'; it was I.~N. Vekua, who introduced this term
in 1943 in his still widely cited article \cite{Ve1} (its English translation can be
found in the book \cite{Ve2}). Also, the term `panharmonic function' (it was
proposed by Duffin \cite{D} in 1971) is used as a convenient equivalent to `solution
of the modified Helmholtz equation':
\begin{equation}
\nabla^2 u - \mu^2 u = 0 , \quad \mu \in \RR \setminus \{0\} . \label{MHh}
\end{equation}
The amended coefficient is used to distinguish this equation from \eqref{Hh}.
Indeed, these equations arise in different areas of mathematical physics: \eqref{Hh}
is usually considered as the reduced wave equation in which $\lambda$ is a wave
number, whereas the physical meaning of $\mu$ in \eqref{MHh} is completely
different. The reason is that its most important application is in the theory of
nuclear forces developed by Yukawa \cite{Y}; see the cited paper \cite{D}, where
this equation is referred to as the Yukawa equation.

In this note, the study of mean value properties of meta- and panharmonic functions
initiated in the author's papers \cite{Ku1} and \cite{Ku2} is continued. The aim is
to extend to these functions (for simplicity they are assumed to be real) a result
which for harmonic functions dates back to the classical theorems of Blaschke
\cite{Bla}, Priwaloff \cite{Pri} and Zaremba \cite{Za} (see also a discussion in
\cite{NV}, Sect.~9).

Before giving its precise formulation, let us introduce some notation used below.
For a point $x = (x_1, \dots, x_m) \in \RR^m$, $m \geq 2$, we denote by $B_r (x) =
\{ y : |y-x| < r \}$ the open ball of radius $r$ centred at $x$; the latter is
called admissible with respect to a domain $D \subset \RR^m$ provided $\overline{B_r
(x)} \subset D$, and $\partial B_r (x)$ is called admissible sphere in this case. If
$D$ has finite Lebesgue measure and a function $f$ is integrable over $D$, then
\[ M^\bullet (f, D) = \frac{1}{|D|} \int_{D} f (x) \, \D x
\]
is its volume mean value over $D$. Here and below $|D|$ is the domain's volume (area
if $D \subset \RR^2$), and $|B_r| = \omega_m r^m$ is the volume of $B_r$, where
$\omega_m = 2 \, \pi^{m/2} / [m \Gamma (m/2)]$ is the volume of unit ball; as usual
$\Gamma$ denotes the Gamma function. If $u \in C^0 (D)$, the mean value over an
admissible sphere $\partial B_r (x) \subset D$ is
\[ M^\circ (f, \partial B_r (x)) = \frac{1}{|\partial B_r|} \int_{\partial B_r (x)}
f (y) \, \D S_y \, ,
\]
where $|\partial B_r| = m \, \omega_m r^{m-1}$ and $\D S$ is the surface area
measure, is another useful notion.

\begin{theorem}[Blaschke, Priwaloff, Zaremba]
Let $D$ be a domain in $\RR^m$, $m \geq 2$, and let $u \in C^0 (D)$. Then $u$ is
harmonic in $D$ if and only if
\[\lim_{r \to +0} r^{-2} [ M^\bullet (u, B_r (x)) - u (x) ] = 0
\]
for every $x \in D$. The assertion also holds with $M^\bullet (u, B_r (x))$ changed
to $ M^\circ (u, \partial B_r (x))$.
\end{theorem}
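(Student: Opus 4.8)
The forward direction (harmonic ⟹ the limit vanishes) is immediate: if $u$ is harmonic then by the classical mean value property $M^\bullet(u, B_r(x)) = u(x)$ and $M^\circ(u, \partial B_r(x)) = u(x)$ for every admissible ball, so the difference is identically zero, not merely $o(r^2)$. The substance is the converse. So assume $u \in C^0(D)$ satisfies $r^{-2}[M^\bullet(u,B_r(x)) - u(x)] \to 0$ for every $x \in D$; we must show $u$ is harmonic.

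**Step 1: Reduce smoothness or avoid it via mollification.** The cleanest route I would take is to show directly that $u$ satisfies the *sub*- and *super*-mean value inequalities in a viscosity/comparison sense, and hence is harmonic. Concretely, I would argue by contradiction using a comparison with solved Dirichlet problems: suppose $u$ is not harmonic in $D$. Pick a ball $B = B_R(x_0)$ with $\overline B \subset D$ and let $h$ be the harmonic function in $B$ with $h = u$ on $\partial B$ (solvable since $u$ is continuous). If $u \not\equiv h$ in $B$, then $w = u - h$ is continuous on $\overline B$, vanishes on $\partial B$, and is not identically zero, so WLOG $\max_{\overline B} w = w(x^*) > 0$ is attained at an interior point $x^*$. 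Since $h$ is harmonic, $w$ inherits the hypothesis: $r^{-2}[M^\bullet(w, B_r(x^*)) - w(x^*)] \to 0$.

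**Step 2: Derive a contradiction at the interior maximum.** At the maximum point $x^*$ we have $w(y) \le w(x^*)$ for all $y$ near $x^*$, hence $M^\bullet(w, B_r(x^*)) \le w(x^*)$, so the limit hypothesis forces $M^\bullet(w, B_r(x^*)) - w(x^*) = o(r^2)$ from below as well. The hard part — and the real obstacle — is squeezing a genuine contradiction out of this, since $o(r^2)$ decay of the averaged deficit is weak; a single interior max need not propagate. The standard fix is to perturb: replace $w$ by $w_\varepsilon(y) = w(y) + \varepsilon|y - x_0|^2$. Then $\nabla^2(\varepsilon|y-x_0|^2) = 2m\varepsilon > 0$, and one computes $M^\bullet(|y-x_0|^2, B_r(x^*)) - |x^* - x_0|^2 = \frac{m}{m+2} r^2$ exactly. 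So $w_\varepsilon$ satisfies $r^{-2}[M^\bullet(w_\varepsilon, B_r(x)) - w_\varepsilon(x)] \to \frac{m\varepsilon}{m+2} > 0$ uniformly on compacta. For small $\varepsilon$, $w_\varepsilon$ still has a positive interior maximum over $\overline B$ (as $w_\varepsilon = \varepsilon|y-x_0|^2 < \varepsilon R^2$ on $\partial B$, while $w_\varepsilon(x^*) \ge w(x^*) > 0$; choose $\varepsilon < w(x^*)/R^2$). At an interior maximum point $y^*$ of $w_\varepsilon$ we again get $M^\bullet(w_\varepsilon, B_r(y^*)) \le w_\varepsilon(y^*)$, so $\limsup_{r\to+0} r^{-2}[M^\bullet(w_\varepsilon, B_r(y^*)) - w_\varepsilon(y^*)] \le 0$, contradicting the strictly positive limit. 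Hence $w \equiv h$ in every such ball, i.e. $u$ is harmonic in $D$.

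**Step 3: The spherical version.** For the statement with $M^\circ$ in place of $M^\bullet$, the argument is identical once I record the analogous exact identity $M^\circ(|y-x_0|^2, \partial B_r(x^*)) - |x^*-x_0|^2 = r^2$ (the factor is $1$ rather than $m/(m+2)$), so the perturbation $w + \varepsilon|y-x_0|^2$ again produces a limit equal to $\varepsilon > 0$ at any interior maximum, the same contradiction. Alternatively one can pass between the two versions using the identity $M^\bullet(f, B_r(x)) = \frac{m}{r^m}\int_0^r \rho^{m-1} M^\circ(f, \partial B_\rho(x))\,\D\rho$, which shows the spherical hypothesis implies the solid one (differentiate in $r$), though running the comparison argument directly is cleaner and avoids any regularity bookkeeping. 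I expect Step 2 — getting a contradiction from merely $o(r^2)$-type information rather than a pointwise differential inequality — to be the crux, and the quadratic perturbation trick is exactly what upgrades the soft hypothesis into something usable.
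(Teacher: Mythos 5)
Your argument is correct, but note that the paper itself offers no proof of Theorem~1.1: it is quoted as a classical result of Blaschke, Priwaloff and Zaremba with a pointer to the survey \cite{NV}. The only related computation in the paper is the Taylor-expansion derivation of \eqref{vol} and \eqref{sph} in Section~2, which would prove the theorem only under the stronger hypothesis $u \in C^2(D)$; your comparison argument is precisely what is needed to handle the stated hypothesis $u \in C^0(D)$, and it is the standard classical proof. The structure is sound: the forward direction is the mean value property; for the converse you compare $u$ with the Poisson solution $h$ on a ball, observe that $w = u - h$ inherits the $o(r^2)$ hypothesis because $h$ satisfies the exact mean value identity, and then kill a putative positive interior maximum of $w$ by adding $\varepsilon |y - x_0|^2$, whose exact mean value excess $\tfrac{m}{m+2}\varepsilon r^2$ (resp.\ $\varepsilon r^2$ for spheres) turns the soft $o(r^2)$ information into a strict positivity that contradicts $M^\bullet(w_\varepsilon, B_r(y^*)) \le w_\varepsilon(y^*)$ at the maximum; the symmetric case $\min w < 0$ is handled by replacing $w$ with $-w$, which is legitimate since the hypothesis is an equality with limit zero. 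Three harmless slips: the convergence $r^{-2}[M^\bullet(w_\varepsilon, B_r(x)) - w_\varepsilon(x)] \to \tfrac{m\varepsilon}{m+2}$ is only pointwise (the hypothesis is pointwise), not uniform on compacta, but you use it only at the single point $y^*$; on $\partial B$ one has $w_\varepsilon = \varepsilon R^2$ exactly, not $< \varepsilon R^2$, though the conclusion that the maximum is interior for $\varepsilon < w(x^*)/R^2$ stands; and the passage from the spherical to the solid hypothesis via $M^\bullet(f, B_r(x)) = \tfrac{m}{r^m}\int_0^r \rho^{m-1} M^\circ(f, \partial B_\rho(x))\,\D\rho$ is by integration, not differentiation. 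None of these affects the validity of the proof.
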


Now, we are in a position to formulate the main result.

\begin{theorem}
Let $D$ be a domain in $\RR^m$, $m \geq 2$, and let $u \in C^2 (D)$. Then $u$ is
panharmonic in $D$ if and only if
\begin{equation}
\lim_{r \to +0} \frac{M^\bullet (u, B_r (x)) - u (x)}{r^2} = \frac{\mu^2 u (x)} {2
(m+2)} \quad \mbox{for every} \ x \in D . \label{par}
\end{equation}
The assertion also holds with $M^\bullet (u, B_r (x))$ changed to $M^\circ (u,
\partial B_r (x))$ in \eqref{par}, provided the right-hand side term is $\mu^2 u (x)
/ (2 m)$.

By changing $\mu^2 u (x)$ to $- \lambda^2 u (x)$, this assertion turns into a
necessary and sufficient condition of metaharmonicity for $u$.
\end{theorem}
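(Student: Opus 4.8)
The plan is to isolate a single pointwise asymptotic expansion of the two mean values valid for an \emph{arbitrary} $u \in C^2(D)$, and then read off both implications and both versions (volume/surface, panharmonic/metaharmonic) from it. Fix $x \in D$ and $r_0 > 0$ with $\overline{B_{r_0}(x)} \subset D$. First I would write Taylor's formula with remainder at $x$,
\[
u(y) = u(x) + \nabla u(x) \cdot (y - x) + \tfrac{1}{2} \sum_{i,j=1}^m \partial_i \partial_j u(x)\,(y_i - x_i)(y_j - x_j) + R(x,y) ,
\]
where, because the Hessian of $u$ is uniformly continuous on $\overline{B_{r_0}(x)}$, the remainder satisfies $|R(x,y)| \le |y - x|^2\, \omega(|y-x|)$ with $\omega(s) \to 0$ as $s \to +0$.

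The next step is to average this identity over the admissible sphere $\partial B_r (x)$ and over the ball $B_r(x)$. The linear term integrates to zero by the symmetry of these sets about $x$, and so do the off-diagonal quadratic terms; the diagonal second moments are elementary (integration in spherical shells together with $|\partial B_r| = m \omega_m r^{m-1}$ and $|B_r| = \omega_m r^m$ gives the value $r^2/m$ over the sphere and $r^2/(m+2)$ over the ball), while the contribution of $R(x, \cdot)$ is in modulus at most $r^2 \omega(r)$. This yields, as $r \to +0$,
\[
M^\circ(u, \partial B_r(x)) - u(x) = \frac{r^2}{2m}\,\nabla^2 u(x) + o(r^2), \qquad M^\bullet(u, B_r(x)) - u(x) = \frac{r^2}{2(m+2)}\,\nabla^2 u(x) + o(r^2) ;
\]
the second relation may alternatively be deduced from the first via $M^\bullet(u, B_r(x)) = m\,r^{-m}\int_0^r M^\circ(u, \partial B_\rho(x))\,\rho^{m-1}\,\D\rho$.

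Dividing by $r^2$ and letting $r \to +0$, these expansions show that for \emph{any} $u \in C^2(D)$ the limits in the statement exist at every $x$ and equal $\nabla^2 u(x)/[2(m+2)]$ and $\nabla^2 u(x)/(2m)$, respectively. Consequently \eqref{par} holds for every $x \in D$ precisely when $\nabla^2 u(x) = \mu^2 u(x)$ for every $x \in D$, that is, precisely when $u$ is panharmonic; the same argument, with the right-hand side $\mu^2 u(x)/(2m)$, settles the surface-mean version. Replacing the modified Helmholtz equation $\nabla^2 u - \mu^2 u = 0$ by the Helmholtz equation $\nabla^2 u + \lambda^2 u = 0$ is the substitution $\mu^2 \mapsto -\lambda^2$, which turns the statement into the metaharmonic one.

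I do not expect a serious obstacle here, since the hypothesis $u \in C^2$ does the heavy lifting: the whole argument is the expansion above. The one point needing care is the claim that the remainder contributes only $o(r^2)$, which is where the $C^2$ assumption (as opposed to the mere continuity used in the theorem of Blaschke, Priwaloff and Zaremba stated above) enters: it relies on the uniform continuity of the second derivatives of $u$ near $x$, so that $\sup_{y \in \overline{B_r(x)}} |R(x,y)|/|y-x|^2 \to 0$, combined with the routine evaluation of the second moments of balls and spheres. For the ``only if'' direction one could instead expand the exact Bessel-type spherical mean value formula for metaharmonic functions from \cite{Ku1}, but the Taylor argument has the advantage of covering both directions at once.
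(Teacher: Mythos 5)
Your proposal is correct, and its first half coincides with the paper's argument: the paper likewise derives the two asymptotic expansions \eqref{vol} and \eqref{sph} from Taylor's formula (averaging kills the linear and off-diagonal quadratic terms, and the second moments give $r^2/(m+2)$ for the ball and $r^2/m$ for the sphere), and uses them exactly as you do for the sufficiency direction. Where you genuinely diverge is in the necessity direction. The paper proves it by invoking the exact mean value identities $M^\bullet(u,B_r(x)) = a^\bullet(\mu r)\,u(x)$ and $M^\circ(u,\partial B_r(x)) = a^\circ(\mu r)\,u(x)$ from \cite{Ku1}, with $a^\bullet, a^\circ$ expressed through modified Bessel functions $I_{m/2}$ and $I_{(m-2)/2}$ (and through $J_\nu$ in the metaharmonic case), and then computes $\lim_{r\to+0}\,[a(\mu r)-1]/(\mu r)^2$ from the Bessel series. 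You instead observe that \eqref{vol} and \eqref{sph} already hold for every $u\in C^2(D)$, so the limits in \eqref{par} exist unconditionally and equal $\nabla^2 u(x)/[2(m+2)]$ and $\nabla^2 u(x)/(2m)$; hence the asymptotic mean value property is equivalent to $\nabla^2 u = \mu^2 u$ pointwise, which settles both implications, both the volume and surface versions, and the metaharmonic variant (via $\mu^2 \mapsto -\lambda^2$) in one stroke, with no Bessel functions at all. Your route is more elementary and more unified; what the paper's route buys is an explicit link to the exact Bessel-type mean value formulas of \cite{Ku1}, which the author then reuses in the subsequent discussion of the coefficients $a^\bullet(\mu r)$ and $a^\circ(\mu r)$.
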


\section{Proof of Theorem 1.2 and discussion}

The elementary proof of Theorem 1.2 given below is based on the well-known
relationship between the Laplacian and asymptotic mean values; see \cite{Br}, Ch.~2,
Sect.~2. It follows from Taylor's formula that
\[ u (x + y) - u (x) = y \cdot \nabla u (x) + 2^{-1} y \cdot [H_u (x)] y + o (r^2) \, ,
\]
which is valid for $u \in C^2 (D)$ at $x \in D$ as $r \to 0$ provided $B_r (x)$ is
admissible and $|y| \leq r$; here $H_u (x)$ denotes the Hessian matrix of $u$ at $x$
and ``$\cdot$'' stands for the inner product in $\RR^m$. Averaging each term of the
equality with respect to $y \in B_r (0)$, one obtains
\[ M^\bullet (u, B_r (x)) - u (x) = \frac{1}{2 |B_r|} \int_{B_r (0)} y \cdot [H_u (x)] 
y \, \D y + o (r^2) \, ,
\]
because the mean value of the first order term vanishes. It is straightforward to
calculate that
\begin{equation}
\lim_{r \to +0} \frac{M^\bullet (u, B_r (x)) - u (x)}{r^2} = \frac{\nabla^2 u (x)}
{2 (m+2)} \, , \quad x \in D . \label{vol}
\end{equation}
Similarly, it follows that
\begin{equation}
\lim_{r \to +0} \frac{M^\circ (u, \partial B_r (x)) - u (x)}{r^2} = \frac{\nabla^2 u
(x)} {2 m} \, , \quad x \in D , \label{sph}
\end{equation}
by virtue of averaging with respect to $y \in \partial B_r (0)$.

\begin{proof}[Proof of Theorem 1.2]
Let equality \eqref{par} hold, then combining it and formula \eqref{vol} one obtains
that $u$ is panharmonic in $D$; moreover, \eqref{sph} yields the analogous assertion
when $M^\circ (u, \partial B_r (x))$ stands in \eqref{par} instead of $M^\bullet (u,
\partial B_r (x))$.

Now, let $u$ be panharmonic in $D$, and so the mean value equality 
\[ M^\bullet (u, B_r (x)) = a^\bullet (\mu r) u (x) , \quad a^\bullet (\mu r) = 
\Gamma \left( \frac{m}{2} + 1 \right) \frac{I_{m/2} (\mu r)}{(\mu r / 2)^{m/2}} \, ,
\]
holds for every admissible $B_r (x) \subset D$ (see \cite{Ku1}, p.~95); here $I_\nu$
denotes the modified Bessel function of order $\nu$. Thus, \eqref{par} is true
provided
\[ \lim_{r \to +0} \frac{a^\bullet (\mu r) - 1}{(\mu r)^2} = \frac{1}{2 (m+2)} \, ,
\]
which follows from the definition of $I_{m/2}$.

In order to prove the necessity of
\begin{equation}
\lim_{r \to +0} \frac{M^\circ (u, \partial B_r (x)) - u (x)}{r^2} = \frac{\mu^2 u
(x)} {2 m} \quad \mbox{for every} \ x \in D  \label{par'}
\end{equation}
for panharmonicity of $u$, one has to apply the mean value equality 
\[ M^\circ (u, \partial B_r (x)) = a^\circ (\mu r) u (x) , \quad a^\circ (\mu r) = 
\Gamma \left( \frac{m}{2} \right) \frac{I_{(m-2)/2} (\mu r)}{(\mu r / 2)^{(m-2)/2}}
\, ,
\]
which holds for a panharmonic $u$ provided $B_r (x) \subset D$ is admissible; see
\cite{Ku1}, p.~94. Then \eqref{par} follows from
\[ \lim_{r \to +0} \frac{a^\circ (\mu r) - 1}{(\mu r)^2} = \frac{1}{2 m} \, ,
\]
which is true by the definition of $I_{(m-2)/2}$.

Considerations aimed at proving the analogous necessary and sufficient condition of
metaharmonicity are similar, but the coefficients $a^\bullet (\lambda r)$ and
$a^\circ (\lambda r)$ in the mean value equalities involve the Bessel function
$J_\nu$ instead of $I_\nu$; its order is the same as above in both cases.
\end{proof}

Let us consider some equalities related to \eqref{par} and \eqref{par'}. Their
immediate consequence is the following one
\begin{equation*}
(m+2) \lim_{r \to +0} \frac{M^\bullet (u, B_r (x)) - u (x)}{r^2} = m \lim_{r \to +0}
\frac{M^\circ (u, \partial B_r (x)) - u (x)}{r^2} \, , 
\end{equation*}
which is valid for every $x \in D$ provided $u$ is meta- or panharmonic. This is a
rare mean value property shared without any distinction by the both classes of
functions.

If $u$ is harmonic in $D$, then the analogous equality immediately follows from
Theorem~1.1:
\begin{equation}
\lim_{r \to +0} \frac{M^\bullet (u, B_r (x)) - M^\circ (u, \partial B_r (x))}{r^2} =
0 \quad \mbox{for every} \ x \in D . \label{har}
\end{equation}
To the best author's knowledge, the question whether \eqref{har} implies that $u$ is
harmonic was not investigated yet, and so the following assertion complements
Theorem~1.1.

\begin{proposition}
Let $D$ be a domain in $\RR^m$, $m \geq 2$, and let $u \in C^2 (D)$. Then $u$ is
harmonic in $D$ provided equality \eqref{har} holds for every $x \in D$.
\end{proposition}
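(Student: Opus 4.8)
The plan is to reduce equation \eqref{har} to a pointwise identity for $\nabla^2 u$ by invoking the two asymptotic formulas \eqref{vol} and \eqref{sph}, both of which hold for any $u \in C^2(D)$ without assuming that $u$ solves any equation. Indeed, these formulas were derived in the proof of Theorem~1.2 purely from Taylor's expansion of $u$ at $x$ together with the elementary second moments of the ball and the sphere, so they are available here verbatim.

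First I would subtract \eqref{sph} from \eqref{vol}. Since each of the two limits exists for $u \in C^2(D)$, the limit of the difference exists and equals
\[
\lim_{r \to +0} \frac{M^\bullet (u, B_r (x)) - M^\circ (u, \partial B_r (x))}{r^2}
= \left( \frac{1}{2 (m+2)} - \frac{1}{2m} \right) \nabla^2 u (x)
= - \frac{\nabla^2 u (x)}{m (m+2)} \, , \quad x \in D .
\]
Next, the hypothesis \eqref{har} asserts that the left-hand side vanishes for every $x \in D$. Since the constant $-1 / [m(m+2)]$ is nonzero for every $m \geq 2$, this forces $\nabla^2 u (x) = 0$ for all $x \in D$; as $u \in C^2 (D)$, this is precisely the statement that $u$ is harmonic in $D$, which completes the argument.

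There is no real obstacle here: the proposition is an essentially formal consequence of \eqref{vol} and \eqref{sph}, and the only thing to verify is the (immediate) fact that $\tfrac{1}{2(m+2)} \neq \tfrac{1}{2m}$, i.e.\ that the two normalizing constants in the volume and spherical asymptotics genuinely differ. If one prefers a self-contained write-up, one can instead re-derive \eqref{vol} and \eqref{sph} in one stroke from the Taylor expansion already displayed, using $\tfrac{1}{|B_r|}\int_{B_r(0)} y_i y_j \, \D y = \tfrac{r^2}{m+2}\,\delta_{ij}$ and $\tfrac{1}{|\partial B_r|}\int_{\partial B_r(0)} y_i y_j \, \D S_y = \tfrac{r^2}{m}\,\delta_{ij}$, and then subtract; but since \eqref{vol} and \eqref{sph} are already recorded in the text, quoting them suffices.
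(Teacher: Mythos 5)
Your argument is correct and is essentially the paper's own proof: both rest on combining the asymptotic formulas \eqref{vol} and \eqref{sph}, valid for any $u \in C^2(D)$, to turn \eqref{har} into the identity $\nabla^2 u(x)/(m+2) = \nabla^2 u(x)/m$, whence $\nabla^2 u \equiv 0$. The only cosmetic difference is that you subtract the two limits directly while the paper first rewrites \eqref{har} as an equality of the two separate limits.
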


\begin{proof}
Let us write \eqref{har} as follows:
\begin{equation*}
\lim_{r \to +0} \frac{M^\bullet (u, B_r (x)) - u (x)}{r^2} = \lim_{r \to +0}
\frac{M^\circ (u, \partial B_r (x)) - u (x)}{r^2} \quad \mbox{for every} \ x \in D.
\end{equation*}
Then, according to \eqref{vol} and \eqref{sph}, we have that
\[ \frac{\nabla^2 u (x)}{m+2} = \frac{\nabla^2 u (x)} {m} \quad \mbox{for every} \ 
x \in D ,
\]
which yields that $u$ is harmonic in $D$.
\end{proof}

Now, let us turn to properties related to \eqref{par}, but having no analogues for
harmonic functions. Since the equalities
\[ u (x) = M^\bullet (u, B_r (x)) / a^\bullet (\mu r) = M^\circ (u, \partial B_r (x)) 
/ a^\circ (\mu r) 
\]
hold for every $\ x \in D$ and every admissible $B_r (x)$ provided $u$ is
panharmonic, we see that \eqref{par} implies
\begin{eqnarray*}
&& \lim_{r \to +0} \frac{M^\bullet (u, B_r (x)) - u (x)}{r^2} = \frac{\mu^2
M^\bullet (u, B_r (x))} {2 (m+2) \, a^\bullet (\mu r)} \\ && \lim_{r \to +0}
\frac{M^\circ (u, \partial B_r (x)) - u (x)}{r^2} = \frac{\mu^2  M^\circ (u,
\partial B_r (x))} {2 \, m \, a^\circ (\mu r)}
\end{eqnarray*}
for these $x$ and $r$.

On the other hand, if each of these equalities holds for every $\ x \in D$ and every
admissible $B_r (x)$, then $u \in C^2 (D)$ is panharmonic in $D$. Indeed,
\eqref{vol} yields that the left-hand side of the first of these equalities is equal
to $\frac{\nabla^2 u (x)} {2 (m+2)}$, whereas letting $r \to 0$ on the right-hand
side one obtains $\frac{\mu^2 u (x)} {2 (m+2)}$, which implies that $u$ is
panharmonic in $D$. The same follows by combining the second equality and
\eqref{sph}.

For metaharmonic functions, the two equalities analogous to the last ones have $-
\lambda^2$ instead of $\mu^2$ and other functions $a^\bullet$ and $a^\circ$, but
again these equalities imply that $u$ is metaharmonic in~$D$.

\vspace{-12mm}

\renewcommand{\refname}{
\begin{center}{\Large\bf References}
\end{center}}
\makeatletter
\renewcommand{\@biblabel}[1]{#1.\hfill}
\makeatother

\end{document}